\newtheorem{thm}{Theorem}[section]
\newtheorem{cor}[thm]{Corollary}
\newtheorem{lem}[thm]{Lemma}
\newtheorem{prop}[thm]{Proposition}
\theoremstyle{definition}
\newtheorem{defn}[thm]{Definition}
\newtheorem{example}[thm]{Example}
\newcommand{\bdy}{\partial}                                         
\newcommand{\bish}{{\rm(\symbishop)}}                               
\newcommand{\cl}[1]{\ensuremath{\overline{{#1}}}}                   
\newcommand{\C}{\mathbb{C}}                                         
\newcommand{\map}[3]{\ensuremath{{#1}:{#2}\longrightarrow{#3}}}     
\newcommand{\N}{\mathbb{N}}                                         
\newcommand{\n}[1]{\ensuremath{\left\|{#1}\right\|}}                
\newcommand{\R}{\mathbb{R}}                                         
\newcommand{\res}[1]{\ensuremath{\!\!\upharpoonright_{#1}}}         
\newcommand{\set}[2]{\ensuremath{\left\{{#1}\;:\;\,{#2}\right\}}}   
\newcommand{\st}{($*$)}                                             
\newcommand{\wone}{\ensuremath{\omega_1}}                           
\DeclareMathOperator{\supp}{supp}                                   
\newcounter{smallromans}
\newenvironment{romanenumerate}
{\begin{list}{{\normalfont\textrm{(\roman{smallromans})}}}%
  {\usecounter{smallromans}\setlength{\itemindent}{0cm}%
   \setlength{\leftmargin}{5.5ex}\setlength{\labelwidth}{5.5ex}%
   \setlength{\topsep}{.5ex}\setlength{\partopsep}{.5ex}%
   \setlength{\itemsep}{0.1ex}}}%
{\end{list}}
\newcounter{smallromansdash}
\newcounter{bigromans} 
  {\end{list}}
\begin{document}
\title{Chains of functions in $C(K)$-spaces}
\begin{abstract}
The Bishop property {\bish}, introduced recently by K.\ P.\ Hart, T.\ Kochanek and the
first-named author, was motivated by Pe\l czy\'nski's classical work on weakly compact
operators on $C(K)$-spaces. This property asserts that certain chains of functions
in said spaces, with respect to a particular partial ordering, must be countable. There are two versions of
{\bish}:\ one applies to linear operators on $C(K)$-spaces and the other to the compact
Hausdorff spaces themselves.\smallskip

We answer two questions that arose after {\bish} was first introduced. 
We show that if $\mathscr{D}$ is a class of compact spaces that is preserved when taking closed subspaces and
Hausdorff quotients, and which contains no non-metrizable linearly ordered space, then
every member of $\mathscr{D}$ has {\bish}. Examples of such classes include all $K$
for which $C(K)$ is Lindel\"of in the topology of pointwise convergence (for instance,
all Corson compact spaces) and the class of Gruenhage compact spaces. We
also show that the set of operators on a $C(K)$-space satisfying {\bish} does not
form a right ideal in $\mathscr{B}(C(K))$. Some results regarding local connectedness
are also presented.
\end{abstract}

\author[T. Kania]{Tomasz Kania}
\address{Department of Mathematics and Statistics, Fylde College, Lancaster University,
Lancaster LA1 4YF, United Kingdom---and---Institute of Mathematics, Polish Academy of Sciences, \'{S}niadeckich 8, 00-056 Warszawa, Poland}
\email{tomasz.marcin.kania@gmail.com}

\author[R. J. Smith]{Richard J. Smith}
\address{School of Mathematical Sciences, University College Dublin, Belfield, Dublin 4, Ireland}
\email{richard.smith@maths.ucd.ie}

\thanks{This paper grew out of discussions held at the Workshop on set theoretic methods
in compact spaces and Banach spaces, Warsaw, and at the inaugural meeting of
the QOP network, Lancaster University, April 2013.}

\subjclass[2010]{Primary 46B50, 46E15, 37F20; Secondary 54F05, 46B26}
\keywords{compact space, linear ordering, uncountable chain} 
\date{\today}
\maketitle

\section{Introduction}\label{intro}
The aim of this note is to continue the line of research undertaken in the recent work \cite{hkk:13} of Kochanek, Hart and the first-named author concerning the so-called {\em `Bishop' property} of Hausdorff spaces, denoted {\bish}, which arose from operator-theoretic considerations.\medskip

Let $K$ be a compact Hausdorff space and denote by $C(K)$ the Banach space of all scalar-valued continuous functions on $K$ furnished with the supremum norm. Pe\l czy\'nski characterized weakly compact operators $T\colon C(K)\to X$, where $X$ is an arbitrary Banach space, as precisely those which do not preserve copies of $c_0$ inside $C(K)$. In fact, $T$ is not weakly compact if it does not preserve copies of $c_0$ spanned by sequences of disjointly supported norm-one functions. Given a sequence $(f_n)_{n=1}^\infty$ of such functions in $C(K)$, set $g_n = f_1 + \cdots + f_n$ ($n\in \mathbb{N}$)---then the functions $(g_n)_{n=1}^\infty$ behave like elements of the summing basis of $c_0$. Therefore, we infer that the operator $T$ is weakly compact if and only if $\inf_{n>m}\|Tg_n - Tg_m\|=0$. Let us put this observation into a more general framework.\medskip

Given two distinct functions $f,g\in C(K)$, we write
\[
\text{$f\prec g$ whenever $f\res{\supp f} \;= g\res{\supp f}$}.
\]
Here, $\supp f$ denotes the closure of $\{t \in K\colon f(t)\neq 0\}$. The relation
$\prec$ is a partial ordering on $C(K)$, which has been already studied in the context of positive elements in arbitrary C*-algebras (see \cite[Theorem 3.11]{geometric}, where $\prec$ is called the \emph{geometric pre-ordering}). We shall however confine ourselves to the classical, commutative setting.\medskip

Let $X$ be a Banach space and let $\map{T}{C(K)}{X}$ be a bounded linear operator. Then $T$ is said to have {\bish} if
\[
\inf\{\n{Tf - Tg}\colon f,g \in F, f \prec g\} \;=\; 0,
\]
whenever $F$ is a norm-bounded uncountable $\prec$-chain in $C(K)$ ($F$ is a $\prec$-chain if for all distinct $f,g\in F$ either $f\prec g$ or $g\prec f$). Using this ostensibly \emph{ad hoc} definition we can rephrase the above-mentioned theorem of Pe\l czy\'nski: the operator $T\colon C(K) \to X$ is weakly compact if and only if $\inf\{\n{Tf - Tg}\colon f,g \in F, f \prec g\}=0$ for every norm-bounded countable $\prec$-chain $F$ in $C(K)$. It was proved in \cite{hkk:13} that if $K$ is extremally disconnected compact Hausdorff space, then $T$ is weakly compact if and only if it has {\bish}. Because the identity operator on a $C(K)$-space, $I_{C(K)}$, is never weakly compact (unless $K$ is finite), we can ask what topological properties of $K$ allow $I_{C(K)}$ to have {\bish} (in this case we say that $K$ itself has {\bish}). The class of compact spaces having {\bish} with this respect can be thought of as far distant as possible from the class of extremally disconnected compact spaces and, on the other hand, it is a common roof for the classes of compact metric spaces (as we shall now explain) and locally connected compact spaces.\medskip

Before we explain why compact metric spaces have {\bish}, let us reformulate this property in the following helpful way. Given $\delta>0$ and $f,g\in C(K)$, we write $f\prec_\delta g$ if $f\prec g$ and
$\n{f-g}\geqslant \delta$. A subset $F\subseteq C(K)$ is called a {\em $\delta$-$\prec$-chain}
if, for any two different $f,g\in F$, either $f\prec_\delta g$ or $g\prec_\delta f$.
Thus, $K$ has {\bish} if and only if, for each $\delta>0$, every bounded $\delta$-$\prec$-chain
in $C(K)$ is at most countable. By rescaling, it follows that $K$ has {\bish} if and only if
every bounded $1$-$\prec$-chain in $C(K)$ is at most countable.\medskip

Apparently every compact metric space $K$ enjoys this property, as in this case $C(K)$ is
separable in the norm topology, and hence contains no uncountable discrete subset (evidently,
any $1$-$\prec$-chain is discrete in the norm topology). On the other hand, the ordinal interval $[0,\wone]$, the lexicographically ordered split interval $[0,1]\times\{0,1\}$ and
the \v Cech--Stone compactification of the natural numbers $\beta\N$ are examples of compact
spaces that do not have {\bish}.  To see that the first two
spaces do not have {\bish}, consider the uncountable 1-$\prec$-chains of indicator functions
$\set{\mathds{1}_{[0,\alpha]}}{\alpha < \wone}$ and $\set{\mathds{1}_{[(0,0),(x,0)]}}{x \in [0,1]}$
in the corresponding spaces of continuous functions, respectively. In the case of $\beta\N$,
consider an enumeration of the rational numbers $(q_n)_{n=1}^\infty$, the sets $E_x =\set{n \in \N}{q_n < x}$,
$x \in \R$, and finally the functions $\mathds{1}_{E_x} \in \ell_\infty$ and their canonical
extensions to $\beta\N$. (Proposition~\ref{tych} generalizes this to the \v Cech--Stone compactifications of Tychonoff spaces from a wider class.) \medskip

Given these examples, a compact space $K$ may be viewed as being in some way
well-behaved if it has {\bish}. Thus, we find the task of identifying classes of
compact spaces having {\bish} to be natural and important, both in terms of
the topology of compact spaces $K$ and the ideal structure of the Banach algebra
$\mathscr{B}(C(K))$.\medskip

Let $\mathscr{L}$ denote the class of compact spaces $K$ for which $C_p(K)$ is Lindel\"of,
where $C_p(K)$ denotes $C(K)$ in the topology of pointwise convergence. As far as the authors
are aware, $\mathscr{L}$ has not been fully delineated. However, it is known that $\mathscr{L}$ contains the
important subclass $\mathscr{C}$ of Corson compact spaces \cite{alster-pol:80,gulko:78}.

\begin{defn}\label{corson}
A compact space $K$ is called {\em Corson} if, for some set $\Gamma$, it is homeomorphic to a
subspace of $\Sigma(\Gamma)$ in the pointwise topology, where
\[
\Sigma(\Gamma) \;=\; \set{f \in \R^\Gamma}{\text{$f(\gamma)\neq 0$ for at most countably many $\gamma \in \Gamma$}}.
\]
\end{defn}

All metrizable, Eberlein, Talagrand and Gul'ko compact spaces are in $\mathscr{C}$.
In this note we show that all spaces in $\mathscr{L}$ have {\bish}. Thus
we answer positively \cite[Question 3.9]{hkk:13}, which asks whether Eberlein compact spaces
(spaces homeomorphic to weakly compact subsets of Banach spaces) have {\bish}.\medskip

There is another large, though lesser-known, class of compact spaces of relevance to this note.
It was first introduced in \cite{gru:87}, and the second-named author found it to be of importance
when studying strictly convex norms on Banach spaces. The definition
below is equivalent to that given in \cite{gru:87} -- see \cite[Proposition 2]{smith:09}.

\begin{defn}\label{gru}
We say that a compact space $K$ is {\em Gruenhage} 
if we can find a sequence $(\mathscr{U}_n)_{n=1}^\infty$ of families of open subsets of $K$,
together with a sequence of open subsets $(R_n)_{n=1}^\infty$ of $K$, such that
\begin{enumerate}
\item $U \cap V = R_n$ whenever $n\in \N$ and $U,V \in \mathscr{U}_n$ are distinct, and
\item if $s,t \in K$, then $\{s,t\} \cap U$ is a singleton for some
$m \in \N$ and some $U \in \mathscr{U}_m$.
\end{enumerate}
\end{defn}

Let us denote by $\mathscr{G}$ the class of Gruenhage compact spaces. All metrizable, Eberlein,
Gul'ko and descriptive compact spaces are in $\mathscr{G}$. In particular,
all scattered compact spaces having countable Cantor--Bendixson height or, more generally, all compact
$\sigma$-discrete spaces (unions of countably many relatively discrete subsets), are descriptive
and thus members of $\mathscr{G}$. We prove that all spaces in $\mathscr{G}$ have {\bish}.\medskip

That all elements of $\mathscr{L}$ and $\mathscr{G}$ have {\bish} follows from the
next result.

\begin{thm}\label{main1}
Suppose that $\mathscr{D}$ is a class of compact Hausdorff spaces that is preserved when
taking closed subspaces and Hausdorff quotients, and which contains no non-metrizable linearly
ordered space. Then every member of $\mathscr{D}$ has {\bish}.
\end{thm}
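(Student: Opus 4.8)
The plan is to argue by contradiction: suppose some $K \in \mathscr{D}$ fails {\bish}. Then there is a $\delta > 0$ and an uncountable $\delta$-$\prec$-chain $F = \{f_\alpha : \alpha < \wone\}$ in $C(K)$, which we may assume is well-ordered by $\prec$ in the sense that $\alpha < \beta$ implies $f_\alpha \prec_\delta f_\beta$ (by passing to an uncountable subchain, using that any chain of order type with uncountable cofinality or coinitiality contains a monotone $\wone$-sequence; after rescaling we also assume $\|f_\alpha\| \leqslant 1$). The key structural observation is that the supports $S_\alpha = \supp f_\alpha$ are then increasing: $f_\alpha \prec f_\beta$ forces $S_\alpha \subseteq S_\beta$, since off $S_\alpha$ the function $f_\beta$ agrees with $f_\alpha$ only in the sense... — more carefully, $f_\alpha\res{S_\alpha} = f_\beta\res{S_\alpha}$, so on the open set $\{f_\alpha \neq 0\}$ we have $f_\beta = f_\alpha \neq 0$, whence $\{f_\alpha \neq 0\} \subseteq \{f_\beta \neq 0\}$ and thus $S_\alpha \subseteq S_\beta$. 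Moreover for $\alpha < \beta$ there is a point $t_{\alpha\beta} \in K$ with $|f_\alpha(t_{\alpha\beta}) - f_\beta(t_{\alpha\beta})| \geqslant \delta/2$, say, and necessarily $t_{\alpha\beta} \notin S_\alpha$ (since the two functions agree on $S_\alpha$).

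Next I would build a non-metrizable linearly ordered compact space as a quotient of a closed subspace of $K$, contradicting the hypothesis on $\mathscr{D}$. The natural candidate for the underlying set is $L = \bigcap_{\alpha<\wone}\overline{\bigcup_{\beta \geqslant \alpha} (S_\beta \setminus S_\alpha)}$ or, more robustly, the set of points that "see" the whole chain. Concretely: pick for each $\alpha$ a point $t_\alpha$ in the closure of $\bigcup_{\beta > \alpha} (S_\beta \setminus S_\alpha)$ witnessing that $f$ genuinely changes past stage $\alpha$; let $L_0 = \{t_\alpha : \alpha < \wone\}$ and $L = \overline{L_0}$, a closed (hence compact) subspace of $K$. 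On $L$ one defines an equivalence: $s \sim s'$ iff for every $\alpha$, $s \in S_\alpha \Leftrightarrow s' \in S_\alpha$ (equivalently, $f_\alpha(s) = f_\alpha(s')$ for all $\alpha$, after noting the values are determined on supports). Because $\{S_\alpha\}$ is an increasing chain of closed sets, the function $s \mapsto \sup\{\alpha : s \notin S_\alpha\} \in \wone + 1$ descends to a well-defined, continuous, order-faithful map from the quotient $L/{\sim}$ onto a subspace of the ordinal interval $[0,\wone]$ — and this quotient is Hausdorff provided the $S_\alpha$ separate the $\sim$-classes, which they do by construction. Since $L_0$ is uncountable and the "rank" function is injective on a subset of it, the quotient is an uncountable, hence non-metrizable, linearly ordered compact space lying in $\mathscr{D}$, the desired contradiction.

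The main obstacle I expect is verifying that the quotient map is continuous and that the quotient is genuinely Hausdorff and linearly ordered — the supports $S_\alpha$ are closed but not open, so the preimages of the natural order intervals under $s \mapsto \operatorname{rank}(s)$ are only of the form (intersection of closed sets) $\cap$ (complement of a closed set), and one must check these generate a topology making $L/{\sim}$ compact Hausdorff with a continuous order. Equivalently, one needs the order topology on the rank-image to coincide with the quotient topology; this is where the hypothesis that $\mathscr{D}$ is closed under \emph{Hausdorff} quotients is used — one passes to the maximal Hausdorff quotient and argues it is still order-like and still uncountable. A secondary technical point is the initial reduction to a genuinely $\wone$-indexed monotone chain with uniformly separated, uniformly bounded terms; this is routine but requires a small Ramsey-type or pressing-down argument to ensure the witnessing points $t_{\alpha\beta}$ can be chosen coherently (e.g. depending only on $\alpha$, via a $\Delta$-system or a stationary-set argument on $\wone$). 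Once the linearly ordered compact quotient is in hand, non-metrizability is immediate from uncountability of a linearly ordered compactum with a dense-in-itself or simply uncountable order-image, and Theorem~\ref{main1} follows.
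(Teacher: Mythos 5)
Your high-level strategy --- produce a non-metrizable linearly ordered compact space as a Hausdorff quotient of a closed subspace of $K$ --- is exactly the right one, but your very first reduction is false, and the proof cannot start the way you start it. An uncountable linearly ordered set need not contain a monotone $\wone$-sequence: it may be order-isomorphic to an uncountable set of reals, which contains no copy of $\wone$ or $\wone^*$. This is not a pathological case to be waved away; the canonical uncountable $1$-$\prec$-chains recalled in the introduction (in $C$ of the split interval and of $\beta\N$) are indexed by $\R$ in exactly this way. So you may not assume $F=\set{f_\alpha}{\alpha<\wone}$ with $f_\alpha\prec_\delta f_\beta$ for $\alpha<\beta$, and everything downstream that depends on ordinal indexing --- the rank map into $[0,\wone]$, the pressing-down/stationary-set suggestions --- collapses with it. The paper's proof sidesteps this entirely by taking as the target of the quotient not $[0,\wone]$ but the compact linearly ordered space $\mathscr{I}$ of all initial segments of $F$, which makes sense for a chain of arbitrary order type; the conclusion is then that $\mathscr{I}$ is metrizable, hence second countable, hence has only countably many jumps, and each $f\in F$ supplies the jump $(\leftarrow,f)\to(\leftarrow,f]$, forcing $F$ to be countable.

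The second gap is the one you yourself flag as ``the main obstacle'': continuity of the quotient map and Hausdorffness of the quotient. This does not resolve itself if you work only with the supports $S_\alpha=\supp f_\alpha$; a rank function built from an increasing family of closed sets is in general only semicontinuous, and passing to ``the maximal Hausdorff quotient'' is no rescue, since there is no reason that quotient retains an uncountable order (it could collapse to a point). The paper's mechanism for securing continuity is to exploit the $\delta$-separation: for each initial segment $I$ it uses $W_I=\bigcap_{f\in I}f^{-1}(\{0\})\cap\bigcap_{g\in F\setminus I}g^{-1}(D)$ with $D=\set{z\in\C}{|z|\geqslant\delta}$. The gap between the conditions ``$=0$'' and ``$|\cdot|\geqslant\delta$'' is what makes both $\bigcup_{I\subseteq P}W_I$ and $\bigcup_{P\subseteq J}W_J$ compact (Proposition \ref{initseg}(iii)); consequently $W=\bigcup_I W_I$ is a compact subset of $K$ and the map $t\mapsto I$ for $t\in W_I$ is continuous onto a closed interval of $\mathscr{I}$, which is the Hausdorff quotient you need. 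Your instinct that the witnessing points live off $S_\alpha$ and that the supports increase along the chain is correct, but without the $g^{-1}(D)$ half of the definition you cannot close the topological argument.
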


It follows from the Tietze--Urysohn extension theorem and the Hahn--Banach theorem, respectively, that $\mathscr{L}$ is preserved when taking closed subspaces
and Hausdorff quotients. It was proved in \cite{nakhmanson:88} that $\mathscr{L}$ contains no
non-metrizable linearly ordered elements. Regarding $\mathscr{G}$, it is immediate that
this class is preserved under closed subspaces. Preservation under continuous images is proved
in \cite[Theorem 23]{smith:09}, and the fact that $\mathscr{G}$ contains no non-metrizable linearly ordered
elements follows from \cite[Proposition 6.5]{bl:13}.\medskip

It is worth noting that $\mathscr{L}$ and $\mathscr{G}$ are incomparable.
Mr\'owka space $\Psi$, defined using a~maximal, almost disjoint family of subsets of $\N$, is a
compact scattered space of Cantor--Bendixson height $3$, so is Gruenhage. However, $C_p(\Psi)$ is not Lindel\"of \cite[Proposition 1]{dowsimon:06}. On the other hand, there is a
Corson compact space that does not contain any dense metrizable subset \cite[p.\ 258]{tod:81}, and
every Gruenhage compact space possesses such a subset \cite[Theorem 1]{gru:87}.\medskip

Section \ref{main1proof} is devoted to proving Theorem \ref{main1}. Section \ref{connect}
explores {\bish} in the context of connected and locally connected spaces. In Section \ref{further-observations},
we answer in the negative Question 4.3 of \cite{hkk:13}.

\section{The proof of Theorem \ref{main1}}\label{main1proof}

Before proceeding with the proof, we introduce some notation and auxiliary results.
Given a linearly ordered set $F$ and $f, g \in F$, we define the intervals $(f,g)$, $(f,g]$,
$[f,g]$ and $[f,g)$ in the obvious way. We let $(\leftarrow,f)$ and $(f,\rightarrow)$ denote
the set of strict predecessors and strict successors of $f$, respectively, and define
$(\leftarrow,f]$ and $[f,\rightarrow)$ accordingly.\medskip

A subset $I \subseteq F$ is called an {\em initial segment} if $f \prec g$ and $g \in I$ implies
$f \in I$. The set $\mathscr{I}$ of initial segments of $F$ is naturally linearly ordered with
respect to inclusion, and is compact with respect to the induced order topology.\medskip

Let $K$ be a compact Hausdorff space and let us fix a non-empty $1$-$\prec$-chain $F \subseteq C(K)$. Set
$D=\set{z \in \C}{|z|\geqslant 1}$. Given $I \in \mathscr{I}$, we define
\[
W_I \;:=\; \bigcap_{f \in I} f^{-1}(0) \cap \bigcap_{g \in F\setminus I} g^{-1}(D),
\]
where $f^{-1}(0)$ is shorthand for $f^{-1}(\{0\})$. The next proposition lists some
straightforward yet important facts about the $W_I$ to be used in the proof of Theorem \ref{main1}.

\begin{prop}\label{initseg} If we fix a $1$-$\prec$-chain $F\subseteq C(K)$, then the following statements hold. Throughout, $I$ and $J$ are assumed to be elements of $\mathscr{I}$.
\begin{romanenumerate}
\item If $\varnothing \neq I \subsetneq F$, then $W_I$ is non-empty.
\item If $I \subsetneq J\subseteq F$, then $W_I \cap W_J$ is empty.
\item If $P \in \mathscr{I}$, then $\bigcup_{I \subseteq P} W_I$ and $\bigcup_{P \subseteq J} W_J$ are compact.
\end{romanenumerate}
\end{prop}\medskip

\begin{proof}~
\begin{romanenumerate}
\item If $f \prec_1 g$, then $f^{-1}(0) \cap g^{-1}(D)$ is non-empty. Bearing
this in mind, if $\varnothing \neq I \subsetneq F$, then $W_I$ is non-empty, being as it is the
intersection of a family of non-empty compact sets having the finite intersection property.
\item Given $g \in J\setminus I$,
we have $W_I \subseteq g^{-1}(D)$ and $W_J \subseteq g^{-1}(0)$.
\item Let $P\in\mathscr I$. We show that $\bigcup_{I \subseteq P} W_I$ is compact. First, we assume that $P \neq F$. We claim that
\[
\bigcup_{I \subseteq P} W_I \;=\; \bigcap_{g \in F\setminus P} g^{-1}(D),
\]
which is of course compact. By definition, if $I \subseteq P$ and $g \in F\setminus P$, then $W_I$ is a~subset of $g^{-1}(D)$.
To see the other inclusion, fix $$t \in \bigcap_{g \in F\setminus P} g^{-1}(D)$$ and
set
\[
I \;=\; \set{g \in P}{|g(t)| < 1}.
\]
Using the definition of $\prec$, it follows that $I$ is an initial segment. We claim that $t \in W_I$.
Trivially, $|g(t)| \geqslant 1$ whenever $g \in F\setminus I$. Moreover, if $f \in I$ then $f(t)=0$.
Indeed, assume that $f(t)\neq0$. Pick $g\in F\setminus I$ (which we can do as $I \subseteq P \neq F$). Then $f\prec g$, and so $|f(t)|
=|g(t)|\geqslant 1$; hence $f\not\in I$.
Thus $t \in W_I$ as claimed.

\smallskip In the case where $P=F$, we proceed differently. Suppose that $\mathscr{U}$ is an open cover of $\bigcup_{I \subseteq P} W_I$. Then because $W_F$ is compact, there is a finite family $\mathscr{G}\subseteq \mathscr{U}$ satisfying $W_F \subseteq \bigcup\mathscr{G}$.
According to the definition of $W_F$, there exists $f \in F$ such that $f^{-1}(0) \subseteq \bigcup\mathscr{G}$.
Setting $Q=(\leftarrow,f)$, we get $Q \neq F$ and so $\bigcup_{I \subseteq Q} W_I$ is compact, from above.
Thus $$\bigcup_{I \subseteq Q} W_I \subseteq \bigcup\mathscr{F}$$ for some finite family $\mathscr{F} \subseteq \mathscr{U}$.
We conclude that
\[
\bigcup_{I \subseteq F} W_I \;\subseteq\; \bigcup(\mathscr{F} \cup \mathscr{G}),
\]
since $I \not\subseteq Q$ implies that $f \in I$, meaning $W_I \subseteq f^{-1}(0) \subseteq \bigcup\mathscr{G}$.

\smallskip Now we show that $\bigcup_{P \subseteq J} W_J$ is compact. Consider a new $1$-$\prec$-chain $G = F\setminus P$,
its set of initial segments $\mathscr{J}$, together with the sets
\[
W'_I \;=\; \bigcap_{f \in I} f^{-1}(0) \cap \bigcap_{g \in G\setminus I} g^{-1}(D),
\]
where $I \in \mathscr{J}$. Observe that the map $J \mapsto J\setminus P$ is a bijection between the set of $J \in \mathscr{I}$
containing $P$ and $\mathscr{J}$, and $W_J = \bigcap_{f \in P} f^{-1}(0) \cap W'_{J\setminus P}$. Therefore,
\[
\bigcup_{P \subseteq J} W_J \;=\; \bigcap_{f \in P} f^{-1}(0) \cap \bigg( \bigcup_{I \in \mathscr{J}} W'_I \bigg),
\]
so it is compact, from above. \qedhere
\end{romanenumerate}
\end{proof}

We remark in passing that if $W_\varnothing$ is empty then $F$ has a least element, and if $W_F$ is empty then $F$ has a greatest element.
Indeed, if $W_F$ is empty then $f^{-1}(0)$ must be empty for some $f \in F$. However, the definition of $\prec$
forces any such $f$ to be the greatest element of $F$. Likewise, if $W_\varnothing$ is empty, then $g(K) \subseteq D$
for some $g \in F$, and any such $g$ is necessarily the least element of a $1$-$\prec$-chain.\medskip

We also need the following result, which belongs to the folklore of linearly ordered topological spaces.

\begin{prop}\label{countablegaps}
Let $X$ be a second-countable linearly ordered topological space, and let $A$
be the set of $x \in X$ such that $(x,\to)$ has a least element. Then $A$ must be countable.
\end{prop}

\begin{proof}
Assume that $A$ is uncountable. Let $\mathscr{V}$ be a countable base for $X$.
Since each $(\leftarrow,x]$, $x \in A$, is open, there is an uncountable set $B \subseteq A$ and $V \in \mathscr{V}$, such that
$x \in V \subseteq (\leftarrow,x]$ for all $x \in B$, but applying this to any distinct $x,y \in B$ yields a contradiction.
\end{proof}

Now we are able to prove Theorem \ref{main1}.

\begin{proof}[Proof of Theorem \ref{main1}]
Let $K \in \mathscr{D}$, where $\mathscr{D}$ is a class satisfying the hypotheses of the theorem.
Let $F$ be a $1$-$\prec$-chain in $C(K)$, and set $W = \bigcup_{I \in \mathscr{I}} W_I$.
From Proposition \ref{initseg} (iii), we know that $W$ is compact, so in particular $W \in \mathscr{D}$.
Define the map
$\map{\pi}{W}{\mathscr{I}}$ by $\pi(t)=I$ whenever $t \in W_I$ and $I\in \mathscr{I}$. Notice that $\mathscr{I}\setminus \pi(W) \subseteq \{\varnothing,\,F\}$,
by Proposition \ref{initseg} (i). We know that $\pi$ is
continuous because given an open interval $(P,Q) \subseteq \mathscr{I}$,
\[
\pi^{-1}\big((P,Q)\big) \;=\; W\setminus\bigg(\bigg(\bigcup_{I \subseteq P} W_I\bigg)\cup \bigg(\bigcup_{Q \subseteq J} W_J\bigg)\bigg),
\]
is open in $W$, again by Proposition \ref{initseg} (iii). Therefore, $\pi(W) \in \mathscr{D}$ as well.
Now $\pi(W)$ is a~closed interval in the compact linearly ordered space $\mathscr{I}$, so $\pi(W)$
must be metrizable, by hypothesis. It follows that $\mathscr{I}$ is also metrizable.
Finally, given $f \in F$, the initial segment $(\leftarrow,f)\in \mathscr{I}$ has immediate successor
$(\leftarrow,f]\in\mathscr{I}$, so $F$ must be countable, by Proposition \ref{countablegaps}.
\end{proof}

We identify a further class of compact spaces having {\bish}. As with Gruenhage
spaces, the next property was studied in the context of strictly convex norms on Banach spaces \cite{ost:12}.

\begin{defn}\label{star}
We say that a compact space $K$ has {\st} if we can find a sequence $(\mathscr{U}_n)_{n=1}^\infty$ of families of open subsets of $K$,
with the property that given any $x,y \in K$, there exists $n$ such that
\begin{enumerate}
\item $\{x,y\} \cap \bigcup\mathscr{U}_n$ is non-empty, and
\item $\{x,y\} \cap U$ is at most a singleton for all $U \in \mathscr{U}_n$.
\end{enumerate}
\end{defn}

Every Gruenhage compact space has {\st} \cite[Proposition 4.1]{ost:12}, but there are examples of compact scattered
non-Gruenhage spaces having {\st}, both in ZFC and elsewhere (see \cite{smith:12} and \cite[Example 2]{ost:12}, respectively). 
Every scattered compact space having
{\st} has {\bish}, because again the class of such things satisfies the hypotheses of
Theorem \ref{main1}, by  \cite[Proposition 4.5]{ost:12} and \cite[Proposition 6.5]{bl:13}.

\section{Connectedness, local connectedness and their effects on {\bish}}\label{connect}

Drawing pictures of sequences of bumps will suggest to the reader that some form of connectedness
will have consequences for {\bish}.
The next proposition and example, which generalize the fact that $C(\beta\N)$ does not have {\bish},
shows that standard connectedness does not force spaces to have {\bish}.

\begin{prop}\label{tych}
Let $X$ be a Tychonoff (i.e.\ completely regular) space that admits a~countable and locally finite family
$\mathscr{U}$ of pairwise disjoint non-empty open sets. Then neither $\beta X$
nor $\beta X\setminus X$ has {\bish}.
\end{prop}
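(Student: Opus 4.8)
The plan is to transplant to $\beta X$ and $\beta X\setminus X$ the construction recalled for $\beta\N$ in the introduction. First I would fix points $x_n\in U_n$ and, using complete regularity of $X$, continuous functions $\map{\varphi_n}{X}{[0,1]}$ with $\varphi_n(x_n)=1$ and $\{t\in X:\varphi_n(t)\neq 0\}\subseteq U_n$. Since $(U_n)_{n=1}^\infty$ is locally finite and pairwise disjoint, for every $A\subseteq\N$ the sum $f_A=\sum_{n\in A}\varphi_n$ is a well-defined continuous function with $0\leqslant f_A\leqslant 1$, hence $f_A\in C_b(X)=C(\beta X)$. I would then enumerate the rationals as $(q_n)_{n=1}^\infty$, set $A_x=\set{n\in\N}{q_n<x}$ for $x\in\R$, and note that $x<y$ forces $A_x\subsetneq A_y$ with $A_y\setminus A_x$ infinite; thus $\set{A_x}{x\in\R}$ is an uncountable chain of subsets of $\N$, any two distinct members of which differ by an infinite set.

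The second step is to check that, with supports computed in $X$, the family $\set{f_{A_x}}{x\in\R}$ behaves like a $1$-$\prec$-chain. If $A\subseteq B$, then pairwise disjointness of the $U_n$ shows that $f_A$ and $f_B$ agree on the open set $\bigcup_{n\in A}U_n$, which contains $\{t\in X:f_A(t)\neq 0\}$; hence $f_A-f_B$ vanishes on $\cl{\{t\in X:f_A(t)\neq 0\}}^X$. If moreover $m\in B\setminus A$, then $f_B(x_m)=1\neq 0=f_A(x_m)$, so $\n{f_A-f_B}\geqslant 1$; in particular the $f_{A_x}$ are pairwise distinct, and there are uncountably many of them.

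To move to $\beta X$, I would use that each $f_A$ extends to $f_A^{\beta}\in C(\beta X)$ and that $\{f_A^{\beta}\neq 0\}$, being open in $\beta X$, meets the dense set $X$ in $\{f_A\neq 0\}$, which is therefore dense in it; consequently $\supp f_A^{\beta}=\cl{\{f_A\neq 0\}}^{\beta X}$. If $A\subseteq B$, then $f_A^{\beta}-f_B^{\beta}$ restricts on $X$ to $f_A-f_B$, which vanishes on $\{f_A\neq 0\}$, so by continuity $f_A^{\beta}-f_B^{\beta}$ vanishes on $\supp f_A^{\beta}$; thus $f_A^{\beta}\prec f_B^{\beta}$, while $\n{f_A^{\beta}-f_B^{\beta}}=\n{f_A-f_B}\geqslant 1$ when $A\subsetneq B$. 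Hence $\set{f_{A_x}^{\beta}}{x\in\R}$ is an uncountable, norm-bounded $1$-$\prec$-chain in $C(\beta X)$, so $\beta X$ does not have {\bish}.

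For $\beta X\setminus X$ I would restrict further: put $g_A=f_A^{\beta}\res{\beta X\setminus X}$. Since $\{g_A\neq 0\}\subseteq\{f_A^{\beta}\neq 0\}$ we get $\supp g_A\subseteq\supp f_A^{\beta}$, on which $f_A^{\beta}=f_B^{\beta}$ for $A\subseteq B$; so $g_A\prec g_B$ once $g_A\neq g_B$ is known. The norm gap---the step where $\beta X\setminus X$ must be shown to "see" these functions---I would obtain as follows: given $A\subsetneq B$ with $B\setminus A$ infinite, choose distinct $m_1,m_2,\dots\in B\setminus A$ and set $S=\set{x_{m_k}}{k\in\N}$. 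Local finiteness of $(U_{m_k})_{k=1}^\infty$ makes $S$ an infinite closed discrete subset of $X$, so by compactness of $\beta X$ it has a limit point $p\in\beta X$, and $p\notin X$ since $S$ is closed and discrete in $X$; as $f_B-f_A\equiv 1$ on $S$, continuity yields $(g_B-g_A)(p)=1$, whence $\n{g_B-g_A}\geqslant 1$. Therefore $\set{g_{A_x}}{x\in\R}$ is an uncountable, norm-bounded $1$-$\prec$-chain in $C(\beta X\setminus X)$, and $\beta X\setminus X$ does not have {\bish} either. The main obstacle I anticipate is precisely this support bookkeeping across the \v{C}ech--Stone extension and its remainder: showing that $\prec$ computed with $X$-supports passes to $C(\beta X)$ and then survives restriction to $\beta X\setminus X$, and locating a point of the remainder that witnesses the norm separation, for which the closed-discreteness of $S$ in $X$ (forcing its limit points out of $X$) is the crucial ingredient.
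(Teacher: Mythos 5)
Your proposal is correct and follows essentially the same route as the paper: the same functions $\sum_{n\in E_x}f_n$ indexed by Dedekind-type sets of rationals, extended to $\beta X$, with a limit point of $\{x_{m_k}\}$ forced out of $X$ by local finiteness serving as the witness for the norm separation on the remainder. Your extra bookkeeping about supports under the \v{C}ech--Stone extension and restriction just makes explicit what the paper leaves as ``evident.''
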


\begin{proof} Fix an enumeration $U_n$, $n \in \N$, of $\mathscr{U}$. Let $x_n \in U_n$ and take continuous functions $\map{f_n}{X}{[0,1]}$ such that
$f_n(x_n)=1$ and $f_n$ vanishes on $X\setminus U_n$ ($n\in \N$). Let $(q_n)_{n=1}^\infty$ be
an enumeration of the rationals and, given $x \in \R$, define $E_x = \set{n \in\N}{q_n<x}$.
As $\mathscr{U}$ is locally finite,
\[
g_x = \sum_{n \in E_x} f_n\quad (x\in \R),
\]
is a well-defined, continuous and bounded function. Let $\cl{g_x}$, denote the continuous extension of $g_x$ to $\beta X$, where $x \in \R$.
Suppose that $x<y$. Then the set $E_y\setminus E_x$ is infinite. If $p \in \beta X$ is any limit point of
$\set{x_n}{n \in E_y\setminus E_x}$, then necessarily $p \notin X$, because $\mathscr{U}$
is locally finite. It is evident that $\cl{g_x}(p)=0$ and $\cl{g_y}(p)=1$. Therefore
the $\cl{g_x}$, $x\in\R$, and also their restrictions to $\beta X\setminus X$, form uncountable
$1$-$\prec$-chains in $C(\beta X)$ and $C(\beta X\setminus X)$, respectively.
\end{proof}

\begin{cor}\label{connectednobishop}
The spaces $\beta\R$ and $\beta\R\setminus\R$ do not have {\bish}, according to Proposition \ref{tych}.
\end{cor}

On the other hand, we can formulate a sufficient condition if we consider a certain
type of {\em local} connectedness.
Before proving our main result of this section, Theorem \ref{locconbishop}, we make some
preparatory observations. The next definition captures the precise notion of local connectedness that
we require.

\begin{defn}\label{localrelativecon}
Given a closed subset $M$ of a compact space $K$, and $t \in M$, we say that
{\em $t$ has a local base of connected sets relative to $M$} if, given any set $U \ni t$
open in $M$, there exists a connected set $V$, also open in $M$, and satisfying $t \in V \subseteq U$.
\end{defn}

Clearly, $K$ is locally connected if every point of $K$ has a local base of connected
sets relative to $K$. Given $E \subseteq K$, let $\bdy E$ denote the (possibly empty)
boundary of $E$. We recall that, if a subset $V \subseteq K$ is connected
and $U\subseteq K$ is an open set such that $V \cap U$ and $V\setminus U$ are non-empty, then $V \cap \bdy U$ is non-empty.

\begin{lem}\label{1-step}
Suppose that $M \subseteq K$ is closed and we have elements $f_n$, $n\in \mathbb{N}$, and $f$
in $C(K)$, satisfying $f_1 \prec f_2 \prec f_3 \prec \dots \prec f$, and $t_n \in M$, $n\in \N$, such that $f_n(t_n)=0$ and $|f_{n+1}(t_n)|\geqslant 1$ for all $n$. Then, if $u$ is any accumulation point of the sequence $(t_n)_{n=1}^\infty$, then $u$ does not have a local base of
connected sets relative to $M$.
\end{lem}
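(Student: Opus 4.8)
The plan is to single out one open neighbourhood of $u$ in $M$ that contains no connected open (in $M$) sub-neighbourhood of $u$, namely a suitable superlevel set of $f$ itself. (Note that $u\in M$ since $M$ is closed.) I would rely throughout on the transitivity of $\prec$, which in particular gives $f_n\prec f$ for every $n$, and on two elementary features of $\prec$: if $h\prec k$ then $\{h\neq 0\}\subseteq\{k\neq 0\}$, hence $\supp h\subseteq\supp k$; and $h$ and $k$ coincide on $\supp h$.

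First I would pin down two pointwise facts. Since $f_{n+1}(t_n)\neq 0$ we have $t_n\in\supp f_{n+1}$, so $f(t_n)=f_{n+1}(t_n)$ and therefore $|f(t_n)|\geqslant\delta$ for all $n$; as $u$ is an accumulation point of $(t_n)$ and $f$ is continuous, $f(u)$ lies in the closure of $\set{f(t_n)}{n\in\N}$, whence $|f(u)|\geqslant\delta$. Similarly, $t_m\notin\supp f_m$ for every $m$, for otherwise $f(t_m)=f_m(t_m)=0$, contradicting $|f(t_m)|\geqslant\delta$. Put $O=\set{t\in M}{|f(t)|>\delta/2}$; this is open in $M$ and contains $u$, and I claim it is the required neighbourhood, i.e.\ no connected $V$, open in $M$, satisfies $u\in V\subseteq O$.

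Suppose such a $V$ existed. Since $u$ is an accumulation point of $(t_n)$, the set $V$ contains $t_n$ for infinitely many $n$; fix one such $n$, fix $m>n$ with $t_m\in V$ as well, and set $S=\supp f_{n+1}$ (a closed subset of $K$). Then $t_n\in S$, while $t_m\notin S$, because $t_m\notin\supp f_m$ and $\supp f_{n+1}\subseteq\supp f_m$. Hence $V$ meets $U':=M\setminus S$, which is open in $M$ (it contains $t_m$), and $V$ also meets $M\setminus U'=S\cap M$ (it contains $t_n$). As $V$ is connected, the boundary fact recalled before the statement of the lemma yields a point $w\in V\cap\bdy_M U'$; since $S\cap M$ is closed in $M$ we have $\bdy_M U'=\bdy_M(S\cap M)\subseteq S$, so $w\in S$. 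Now $w\in S=\supp f_{n+1}$ and $f_{n+1}\prec f$ give $f(w)=f_{n+1}(w)$, while $w\in V\subseteq O$ gives $|f(w)|>\delta/2$; thus $f_{n+1}(w)\neq 0$. But then $\set{s\in M}{f_{n+1}(s)\neq 0}$ is a neighbourhood of $w$, open in $M$ and contained in $S\cap M=M\setminus U'$, so $w$ lies in the interior of $M\setminus U'$ relative to $M$ --- contradicting $w\in\bdy_M U'$. This contradiction shows no such $V$ exists, so $u$ does not have a local base of connected sets relative to $M$.

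I expect the only delicate points to be the support bookkeeping: the fact $t_m\notin\supp f_m$ and, at the crucial step, the identity $f(w)=f_{n+1}(w)$ at the boundary point $w$. These are exactly where the definition of $\prec$ interacts with the hypothesis $|f_{n+1}(t_n)|\geqslant\delta$; granting them, the conclusion is a single application of the connectedness/boundary lemma.
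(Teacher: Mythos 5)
Your proof is correct, and it follows the same basic strategy as the paper's: trap a connected, relatively open $V\ni u$ inside the superlevel set $\set{t\in M}{|f(t)|>\frac{1}{2}\delta}$ (after first establishing $|f(u)|\geqslant\delta$) and apply the connectedness/boundary fact, exploiting that $f_k$ agrees with $f$ on $\supp f_k$. The difference lies in the set you cut $V$ with. The paper cuts with $U_m=\set{s\in K}{|f_{m+1}(s)|>\frac{1}{2}\delta}$, using $u$ itself as the point outside $U_m$ (which requires the preliminary observation that $f_{n+1}(t_m)=0$ for $m>n$, hence $u\notin U_n$), and the contradiction at a boundary point $s$ is numerical: $\frac{1}{2}\delta=|f_{m+1}(s)|=|f(s)|>\frac{1}{2}\delta$. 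You cut instead with the complement of $\supp f_{n+1}$, using a second sequence point $t_m$ ($m>n$) as the point outside the support (which requires your observation $t_m\notin\supp f_m$ in place of the paper's claim about $u$), and the contradiction is topological: the boundary point $w$ satisfies $f_{n+1}(w)=f(w)\neq 0$ and is therefore interior, relative to $M$, to $\supp f_{n+1}$, so it cannot lie on the boundary. All the delicate steps you flag (transitivity of $\prec$, $\supp f_{n+1}\subseteq\supp f_m$, $t_m\notin\supp f_m$, and $f(w)=f_{n+1}(w)$) check out, so your version is a complete proof; it trades the paper's claim $u\notin U_n$ for slightly heavier support bookkeeping, but is otherwise of the same nature.
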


\begin{proof}
Let $U_n = \set{s \in K}{|f_{n+1}(s)| > \frac{1}{2}}$, $n\in \N$. Evidently, $|f_{n+1}(s)|=\frac{1}{2}$ if $s \in \bdy U_n$. Suppose that $u \in M$ is an accumulation point of the sequence $(t_n)_{n=1}^\infty$. Then
$u \notin U_n$ for any $n$, because $m>n$ implies that $f_n \prec f_m$, giving $f_{n+1}(t_m)=0$ and $t_m \notin U_n$.
We claim that $|f(u)| \geqslant 1$. Indeed, otherwise, we can find an open set $U \ni u$
such that $|f(s)| < 1$ whenever $s \in U$. But this implies that $t_n \in U$ for some $n$, and since
$f_{n+1}\prec f$, we have $1 \leqslant |f_{n+1}(t_n)| = |f(t_n)| < 1$. Now
let $U=\set{s \in K}{|f(s)|>\frac{1}{2}}$. If $u$ does have a local base of connected
sets relative to $M$, then we could find a connected set $V$, open in $M$, such that $u \in V \subseteq U \cap M$.
However, given $m$ satisfying $t_m \in V$, we have $t_m \in V \cap U_m$ and $u \in V\setminus U_m$.
By connectedness, there exists some $s \in V \cap \bdy U_m$, giving $\frac{1}{2} = |f_{m+1}(s)|= |f(s)| > \frac{1}{2}$,
which is a contradiction.
\end{proof}

Now suppose that we have a bounded $\prec$-chain $F \subseteq C(K)$.
Given a~closed set $M\subseteq K$, we define an equivalence relation $\sim_M$ on $F$ by declaring that
$f \sim_M g$ if and only if $\n{(f-g)\res{M}} < 1$. Evidently, $\sim_M$ is reflexive
and symmetric. To obtain transitivity, notice
that if $t \in K$ and $f \prec g \prec h$, then either $g(t)=f(t)$ or $g(t)=h(t)$
(if $g(t)\neq f(t)$ then $g(t)\neq 0$, giving $g(t)=h(t)$). Moreover, the equivalence
classes of $\sim_M$ are intervals in $(F, \prec)$.\medskip

The main result of this section now follows.
Recall that a linear ordering is {\em scattered} if it contains no order-isomorphic copies
of the rationals.

\begin{thm}\label{locconbishop}
Let $M$ be closed a closed subset of a compact, Hausdorff space $K$ and suppose that we can write the remainder $K\setminus M$ as a union $\bigcup_{n=1}^\infty H_n$,
where each $H_n$ is open in $\cl{H_n}$ {\rm (}$n\in \N${\rm )}, and every
point of $H_n$ has a base of neighbourhoods that are connected sets relative to $\cl{H_n}$. Then the
following statements hold.
\begin{enumerate}
\item Every equivalence class, with respect to $\sim_M$, of any given $1$-$\prec$-chain, is countable and scattered with respect to the induced ordering.
\item If $M$ has {\bish}, then so does $K$.
\item In particular, if $M$ is empty then $K$ has {\bish}.
\end{enumerate}
\end{thm}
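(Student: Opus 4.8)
### Proof proposal for Theorem \ref{locconbishop}

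\textbf{Overall strategy.} The three parts are linked: (3) is the special case $M=\varnothing$ of (2), and (2) will follow from (1) together with the chain decomposition already set up via $\sim_M$. So the real content is part (1), and the key tool is Lemma \ref{1-step}. The plan is to take a $\delta$-$\prec$-chain $F$, fix an equivalence class $\mathscr{C}$ of $\sim_M$, and derive a contradiction from the assumption that $\mathscr{C}$ either is uncountable or contains a copy of $\mathbb{Q}$.

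\textbf{Handling a single equivalence class (part (1)).} First I would record that on $\mathscr{C}$ the ordering cannot jump far on $M$: by definition of $\sim_M$, any two $f,g\in\mathscr{C}$ satisfy $\n{(f-g)\res M}<\delta$, so the "$\delta$-gaps" witnessing $f\prec_\delta g$ must occur off $M$, i.e.\ in some $H_n$. The aim is to produce, inside $\mathscr{C}$, an increasing $\omega$-sequence $f_1\prec f_2\prec\cdots\prec f$ together with points $t_k$ of a single $\cl{H_n}$ with $f_k(t_k)=0$ and $|f_{k+1}(t_k)|\geqslant\delta$; then any accumulation point $u$ of $(t_k)$ lies in $\cl{H_n}$, and if $u\in H_n$ we contradict Lemma \ref{1-step}, while if $u\in\cl{H_n}\setminus H_n\subseteq M\cup(\text{other }H_m)$ we have to be a little more careful — which is where the decomposition $K\setminus M=\bigcup H_n$ with $H_n$ open in $\cl{H_n}$ is used: since $H_n$ is open in its closure, $\cl{H_n}\setminus H_n$ is closed and disjoint from $H_n$, and one arranges the $t_k$ so that their accumulation points stay in $H_n$ (e.g.\ by first passing to a subsequence clustering at a point of $\cl{H_n}$, then noting that if the cluster point lies outside $H_n$ one can restart the argument relative to a different piece or relative to $M$). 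To get such a sequence in the first place: if $\mathscr{C}$ is uncountable, then since $C(M)$-distances are all $<\delta$ on $\mathscr{C}$ but consecutive elements of $F$ are $\delta$-separated, for each adjacent-in-$F$ pair the witnessing point lies in $\bigcup H_n$; an uncountable chain has uncountably many such witnesses, so by pigeonhole uncountably many land in one $H_n$, and from an uncountable well-orderable-below chain one extracts a strictly increasing $\omega$-chain with a strict upper bound $f\in F$ (the chain being linearly ordered, take any strictly increasing $\omega$-sequence cofinal below some fixed element). The non-scattered case is similar: a copy of $\mathbb{Q}$ inside $\mathscr{C}$ gives, after the same pigeonhole, an increasing $\omega$-sequence bounded above, again feeding Lemma \ref{1-step}.

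\textbf{From classes to the whole chain (parts (2) and (3)).} Assume $C(M)$ has {\bish} and let $F$ be a norm-bounded uncountable $\prec$-chain in $C(K)$; fix $\delta>0$ and suppose for contradiction that $F$ is even a $\delta$-$\prec$-chain (the reduction to this case is the standard one recalled in the introduction). The restriction map $f\mapsto f\res M$ sends $F$ into $C(M)$; the images of two elements in \emph{different} $\sim_M$-classes differ by at least $\delta$ in $C(M)$-norm, and one checks the images form a $\prec$-chain in $C(M)$ because $\prec$ is preserved under restriction to a closed set (here $\supp(f\res M)\subseteq \supp f\cap M$, so $f\prec g$ in $C(K)$ gives $f\res M\prec g\res M$ or equality). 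Hence the set of $\sim_M$-classes injects, via this restriction, into a $\delta$-$\prec$-chain in $C(M)$, which is countable since $C(M)$ has {\bish}. Combining with part (1), $F$ is a countable union of countable sets, hence countable — contradiction. Part (3) is immediate: if $M=\varnothing$ then there is a single (empty-on-$M$) equivalence class, so $F=\mathscr{C}$ is countable by (1) directly, with no appeal to {\bish} of $C(M)$ needed.

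\textbf{Main obstacle.} The delicate point is the accumulation-point bookkeeping in part (1): Lemma \ref{1-step} only bites when the cluster point $u$ of the $t_k$ actually lies in the piece $H_n$ whose local connectedness we are exploiting, but a priori $u$ could sit in the "bad" set $\cl{H_n}\setminus H_n$. Getting around this — by exploiting that $H_n$ is open in $\cl{H_n}$, so that the $t_k\in H_n$ cannot cluster only at points outside $H_n$ unless one can re-localise into another $H_m$ or into $M$, and iterating/pigeonholing finitely often — is the part that needs genuine care rather than routine checking.
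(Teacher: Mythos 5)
There is a genuine gap, and it is exactly the one you flag at the end as the ``main obstacle'': you identify it correctly but never close it. Your plan for part (1) is to extract an increasing sequence $f_1\prec f_2\prec\cdots\prec f$ inside an equivalence class, with witness points $t_k$ in a single piece $H_n$, and then to ``arrange'' that the $t_k$ cluster inside $H_n$ rather than in $\cl{H_n}\setminus H_n$ by ``restarting relative to a different piece or relative to $M$'' and ``iterating/pigeonholing finitely often''. Nothing guarantees this terminates: $\cl{H_n}\setminus H_n$ may meet infinitely many other pieces $H_m$, the re-localised points may again cluster on the boundary of the new piece, and there is no bound on the depth of the recursion (restarting ``relative to $M$'' is not available at all, since no connectedness is assumed there). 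The paper's proof exists precisely to control this. It totally orders the finite increasing tuples $\sigma=(n_1<\cdots<n_k)$ by the Kleene--Brouwer ordering, sets $H_\sigma=\bigcap_{i<k}(\cl{H_{n_i}}\setminus H_{n_i})\cap H_{n_k}$ and $\hat{H}_\sigma$ with $\cl{H_{n_k}}$ in the last slot, and proves (Lemma \ref{minimal}, essentially Haydon's) that every non-empty compact $W\subseteq\bigcup_n H_n$ meets a \emph{minimal} $\hat{H}_\sigma$, and that for this minimal $\sigma$ one has $W\cap H_\sigma=W\cap\hat{H}_\sigma$. That equality is what forces the eventual limit point into the open part $H_{n_k}$, where Lemma \ref{1-step} bites --- but only provided the limit point lies in a set $W$ whose assigned minimal $\sigma$ agrees with the one used to choose the $t_k$.

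Securing that proviso is the second missing ingredient, and it is where the real work of part (1) lies. After reducing, via Hausdorff's structure theory, to (a) excluding copies of $\wone$ and $\wone^*$ and (b) showing the scattered-interval quotient is a singleton, the paper gets the agreement of $\sigma$'s in case (a) from \emph{stationarity} of $S_\sigma=\set{\alpha}{\sigma_\alpha=\sigma}$ (so one can choose $\beta_n\nearrow\beta$ with all $\beta_n$ \emph{and} $\beta$ in $S_\sigma$), and in case (b) from a \emph{Baire category} argument on the compact, densely ordered, first countable space of initial segments without greatest elements, producing an interval $(P,Q)\subseteq\cl{\mathscr{T}_\sigma}$ and then $J,J_n\in\mathscr{T}_\sigma$ with $J_n\nearrow J$. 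Your version --- ``a copy of $\mathbb{Q}$ gives, after the same pigeonhole, an increasing $\omega$-sequence bounded above, again feeding Lemma \ref{1-step}'' --- skips all of this; the bare existence of such a sequence is not contradictory, since its witnesses may legitimately accumulate on $\cl{H_m}\setminus H_m$. By contrast, your parts (2) and (3) are essentially the paper's argument and are fine: restricting one representative of each $\sim_M$-class to $M$ yields a $\delta$-$\prec$-chain in $C(M)$, so {\bish} for $C(M)$ makes the set of classes countable, and (1) makes each class countable. But (1) itself is not proved as written.
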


Of course, if $K$ is locally connected, then Theorem \ref{locconbishop} shows that
$K$ has {\bish}. However, if $A$ is a locally connected and compact space, and $K$ is $\sigma$-discrete, i.e.,\ $K=\bigcup_{n=1}^\infty D_n$, where each $D_n$ is discrete in the relative topology, then the locally connected set $A \times \{t\}$ is open in $\cl{A \times D_n}^{A \times K}$ for all $t \in D_n$. Thus Theorem \ref{locconbishop} tells us that $A \times K$
has {\bish}. This example illustrates the fact that Theorem \ref{locconbishop}
can be applied to spaces that are rather far from being locally connected. Let us record the following corollary of Theorem~\ref{locconbishop}.

\begin{cor}\label{dualball}
Let $X$ be a Banach space and denote by $B_{X^*}$ the unit ball of $X^*$ endowed with the weak*-topology. Then $B_{X^*}$ has {\bish}.
\end{cor}

To prove Theorem \ref{locconbishop}, we require some machinery that is based on Haydon's analysis of
locally uniformly rotund norms on $C(K)$, where $K$ is a so-called \emph{Namioka--Phelps compact space} \cite{haydon:08}. Lemma \ref{minimal} below is essentially due to him. Let $X$ be a Hausdorff space, such
that $X=\bigcup_{n=1}^\infty H_n$, where each $H_n$ is open in its closure. Let
\[
\Sigma \;=\; \set{\sigma=(n_1,n_2,\dots,n_k)}{n_1 < n_2 < \dots < n_k,\, k \in \N}.
\]
We introduce a total ordering $\sqsubset$ on $\Sigma$ by declaring that $\sigma \sqsubset \sigma'$ if and only if $\sigma$ is a~proper extension of
$\sigma'$, or if there exists $k \in \N$ such that the $i^{{\rm th}}$ entries
$n_i$ and $n'_i$ of $\sigma$ and $\sigma'$, respectively, are defined for $i \leqslant k$, agree whenever $i < k$, and $n_k < n'_k$.
This is the {\em Kleene--Brouwer ordering} on $\Sigma$, and is different from the lexicographic ordering.\medskip

Given $\sigma=(n_1,n_2,\dots,n_k) \in \Sigma$, let
\begin{align*}
H_\sigma &\;=\; (\cl{H_{n_1}}\setminus H_{n_1}) \cap \dots \cap (\cl{H_{n_{k-1}}}\setminus H_{n_{k-1}}) \cap H_{n_k}\\
\text{and}\quad \widehat{H}_\sigma &\;=\; (\cl{H_{n_1}}\setminus H_{n_1}) \cap \dots \cap (\cl{H_{n_{k-1}}}\setminus H_{n_{k-1}}) \cap \cl{H_{n_k}}.
\end{align*}
Evidently, $H_\sigma \subseteq \widehat{H}_\sigma$ and $\widehat{H}_\sigma$ is closed.

\begin{lem}[{\emph{cf.} \cite[Lemma 3.3]{haydon:08}}]\label{minimal}
Let $W \subseteq X$ be a non-empty and compact set. Then there exists a minimal element $\sigma \in \Sigma$
such that $W \cap \widehat{H}_\sigma$ is non-empty. Moreover, for this $\sigma$, we have $W \cap H_\sigma = W \cap \widehat{H}_\sigma$.
\end{lem}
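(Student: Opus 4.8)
The plan is to construct the required $\sigma$ by a greedy descent through $\Sigma$, and to use the compactness of $M$ to show that the descent terminates after finitely many steps. Since $\Sigma$ is linearly ordered by the Kleene--Brouwer relation, ``minimal'' means ``least'', so it is enough to produce the least element of $S=\set{\tau\in\Sigma}{M\cap\hat H_\tau\neq\varnothing}$. Two preliminary observations: because $H_n$ is open in $\cl{H_n}$, the set $\cl{H_n}\setminus H_n$ is closed in $X$, so every $\hat H_\tau$ is closed and every $M\cap\hat H_\tau$ is compact; and since $M$ is compact and covered by the $H_n$, we have $M\cap H_n\neq\varnothing$ for some $n$, hence $M\cap\hat H_{(n)}=M\cap\cl{H_n}\neq\varnothing$ and $S\neq\varnothing$.

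I would run the descent as follows. Let $n_1$ be the least $n$ with $M\cap\cl{H_n}\neq\varnothing$. Having chosen $n_1<\dots<n_k$ with $\sigma_k:=(n_1,\dots,n_k)\in S$, set
\[
P_k \;=\; (\cl{H_{n_1}}\setminus H_{n_1})\cap\dots\cap(\cl{H_{n_k}}\setminus H_{n_k}),
\]
and observe that $\hat H_{\sigma_k}$ is the disjoint union of $P_k$ and $H_{\sigma_k}$. If $M\cap P_k=\varnothing$ we halt; otherwise $M\cap P_k$ is a non-empty compact subset of $\bigcup_n H_n$. A routine induction using the minimality built into the earlier choices shows that $M\cap P_k$ meets no $H_m$ with $m\leqslant n_k$, so $M\cap P_k$ is covered by $\set{H_n}{n>n_k}$; let $n_{k+1}$ be the least $n>n_k$ with $M\cap P_k\cap\cl{H_n}\neq\varnothing$ and put $\sigma_{k+1}=(n_1,\dots,n_{k+1})\in S$. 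As $\sigma_{k+1}$ properly extends $\sigma_k$, we have $\sigma_{k+1}\prec\sigma_k$.

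Next I would show that the descent halts. If it did not, then $\hat H_{\sigma_1}\supseteq\hat H_{\sigma_2}\supseteq\dots$ would be a decreasing chain of closed sets each meeting the compact set $M$, so by the finite intersection property there would be a point $x\in M\cap\bigcap_k\hat H_{\sigma_k}$. Then $x\in\cl{H_{n_k}}\setminus H_{n_k}$ for all $k$, while $x\in X=\bigcup_n H_n$ forces $x\in H_m$ for some $m\notin\{n_k:k\in\N\}$. But $x\in M\cap P_k\cap H_m$ for every $k$, and an induction on $k$ exploiting the minimality of $n_{k+1}$ (no index strictly between $n_k$ and $n_{k+1}$ can meet $M\cap P_k$ through its closure) yields $m>n_k$ for all $k$, contradicting $n_k\to\infty$.

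It then remains to check that the $\sigma=\sigma_k$ at which the descent halts is $\min S$, and to read off the rest. Let $\tau\in S$ with $\tau\neq\sigma$. If $\tau$ properly extends $\sigma$ then $M\cap\hat H_\tau\subseteq M\cap P_k=\varnothing$, which is absurd; if $\tau$ is an initial segment of $\sigma$ then $\sigma$ extends $\tau$, so $\tau\succ\sigma$; otherwise $\tau$ and $\sigma$ first disagree at some coordinate $j$, and applying the minimality in the choice of $n_j$ to $M\cap\hat H_\tau$ --- which lies in $(\cl{H_{n_1}}\setminus H_{n_1})\cap\dots\cap(\cl{H_{n_{j-1}}}\setminus H_{n_{j-1}})\cap\cl{H_{m_j}}$, where $m_j>n_{j-1}$ is the $j$-th coordinate of $\tau$ --- gives $m_j>n_j$, so once more $\tau\succ\sigma$. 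Hence $\sigma=\min S$; and since $M\cap P_k=\varnothing$ at termination, $M\cap\hat H_\sigma=M\cap H_\sigma$, which is the ``moreover''. The main obstacle is precisely that the Kleene--Brouwer order on $\Sigma$ is \emph{not} a well-order (witness $(1)\succ(1,2)\succ(1,2,3)\succ\dots$), so the existence of a least element of $S$ is not automatic: compactness of $M$ is exactly what kills the bad descending chains, and some care is needed to see that the greedy choices land on the genuine minimum and not merely on some minimal element of a cofinal subset of $S$.
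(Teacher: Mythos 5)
Your proof is correct and follows essentially the same route as the paper's: the same greedy choice of minimal indices $n_1<n_2<\dots$, compactness of $M$ to rule out an infinite descent (your limit point $x\in\bigcap_k P_k$ lying in some $H_m$ with $m>n_k$ for all $k$ is just a rephrasing of the paper's contradiction with the minimal choice of $n_{k+1}$), and the same case analysis on the Kleene--Brouwer order to verify minimality and read off the ``moreover''. No gaps; your extra care in checking that $n_{k+1}$ exists and that minimal means least is a harmless elaboration of steps the paper leaves implicit.
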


\begin{proof}
Let $n_1 \in \N$ be minimal, such that $W \cap \cl{H_{n_1}} \neq \varnothing$. If
$W \cap H_{n_1} = W \cap \cl{H_{n_1}}$, stop by setting $\sigma=(n_1)$. Else,
$W \cap (\cl{H_{n_1}}\setminus H_{n_1}) \neq \varnothing$, so let $n_2$ be minimal, such that $n_2 > n_1$ and
$W \cap (\cl{H_{n_1}}\setminus H_{n_1}) \cap \cl{H_{n_2}} \neq \varnothing$.
If $$W \cap (\cl{H_{n_1}}\setminus H_{n_1}) \cap H_{n_2} = W \cap (\cl{H_{n_1}}\setminus H_{n_1}) \cap \cl{H_{n_2}},$$
let us stop by setting $\sigma=(n_1,n_2)$. Otherwise, let $n_3$ be minimal, such that $n_3>n_2$ and
$$W \cap (\cl{H_{n_1}}\setminus H_{n_1}) \cap (\cl{H_{n_2}}\setminus H_{n_2}) \cap \cl{H_{n_3}} \neq \varnothing.$$
Continuing in this way, we have to stop after finitely many steps. Otherwise, we would get a
strictly increasing sequence $n_1 < n_2 < n_3 < \dots$ such that
\[
W_k \;:=\; W \cap \bigcap_{i=1}^k (\cl{H_{n_i}}\setminus H_{n_i}) \;\neq\; \varnothing,
\]
for all $k\in\N$. Set $V := \bigcap_{k=1}^\infty W_k$. Then $V\neq\varnothing$ by compactness. Let $j$
be minimal, subject to $V \cap H_j \neq \varnothing$, and let $k$ be such that
$n_k \leqslant j < n_{k+1}$ (it is clear that $n_1 \leqslant j$). Then we have
\[
\varnothing \;\neq\; V \cap H_j \;\subseteq\;W \cap \bigcap_{i=1}^k (\cl{H_{n_i}}\setminus H_{n_i}) \cap H_j.
\]
Evidently, this means that $n_k < j$, but this fact contradicts the minimal choice of
$n_{k+1}$.\medskip

So suppose that we have $\sigma$ determined as above. Now let $\sigma' \sqsubset \sigma$. If
$\sigma'$ properly extends $\sigma$, then $W \cap H_{\sigma'} = \varnothing$ by construction.
If $\sigma'$ is not a proper extension, take $k$ such that $n'_i= n_i$ for $i<k$ and $n'_k < n_k$
(where $n'_i$ denotes the $i^{{\rm th}}$ entry of $\sigma'$). Since
$n_k$ was chosen minimally so that $n_k > n_{k-1}$ and
\[
W \cap \bigcap_{i<k} (\cl{H_{n_i}}\setminus H_{n_i}) \cap \cl{H_{n_k}} \;\neq\; \varnothing,
\]
we must have
\[
W \cap \bigcap_{i<k} (\cl{H_{n_i}}\setminus H_{n_i}) \cap \cl{H_{n'_k}} \;=\; \varnothing.
\]
As $\widehat{H}_{\sigma'} \subseteq \bigcap_{i<k} (\cl{H_{n_i}}\setminus H_{n_i}) \cap \cl{H_{n'_k}}$, we conclude that $W \cap \widehat{H}_{\sigma'} = \varnothing$. The second assertion of the lemma is evident.
\end{proof}

\begin{proof}[The proof of Theorem \ref{locconbishop}]
Let us prove assertion (1). Suppose that we have a $1$-$\prec$-chain of $C(K)$, and let $F$
be an equivalence class of this chain with respect to $\sim_M$. We want to show
that $F$ is countable and scattered. This is done in two steps.\medskip

In the first step, we argue by contradiction to eliminate the possibility that $F$
contains an order isomorphic copy of $\wone$ or $\wone^*$. (Here, $\wone^*$ stands for $\omega_1$ with the reversed order.) If $F$ contains a copy $G$
of $\wone^*$, let $g$ be the greatest element of $G$. Then it is a straightforward
exercise to check that the set $\set{g-f}{f \in G}$ is a
$1$-$\prec$-chain which is, moreover, order-isomorphic to $\wone$.
Furthermore, it is easy to see that the elements of this new chain are $\sim_M$-equivalent.
Thus, if $F$ contains an isomorphic copy of $\wone$ or $\wone^*$, we can extract
$\sim_M$-equivalent elements $f_\alpha \in F$, $\alpha<\wone$, such that
$f_\alpha \prec_1 f_\beta$ whenever $\alpha<\beta$.\medskip

Recall that the sets $W_I$ introduced in Section \ref{main1proof}. Here, we define the non-empty compact set
\[
W_\alpha \;:=\; \bigcap_{\xi<\alpha} f_\xi^{-1}(\{0\}) \cap f_\alpha^{-1}(D),
\]
where $D$ is as in Section \ref{main1proof}. Observe that as the $f_\alpha$, $\alpha<\omega_1$, are $\sim_M$-equivalent, we have $W_\alpha \subseteq K\setminus M$. By applying Lemma
\ref{minimal} to $X:=K\setminus M$ and the $W:=W_\alpha$, for each $\alpha$ we obtain $\sigma_\alpha \in \Sigma$
satisfying
\[
W_\alpha \cap H_{\sigma_\alpha} \;=\; W_\alpha \cap \widehat{H}_{\sigma_\alpha} \;\neq\; \varnothing.
\]
Let $S_\sigma = \set{\alpha<\wone}{\sigma_\alpha = \sigma}$. Then $\wone = \bigcup_{\sigma\in\Sigma} S_\sigma$,
which implies that $S:=S_\sigma$ is stationary for some $\sigma\in \Sigma$ (i.e.\ $S$ meets every closed and unbounded subset of $\wone$; the implication follows from \cite[Theorem.~8.3]{jech}), which we fix for the remainder
of step one. As $S$
is stationary, we can find a strictly increasing sequence $(\beta_n)_{n=1}^\infty$ in $S$ which converges
to some $\beta \in S$. Indeed, if $L$ denotes the set of accumulation points (in $\wone$) of elements of $S$, then $L$ is closed and unbounded, thus there exists $\beta \in S \cap L$, from which the existence of $(\beta_n)_{n=1}^\infty$ follows.\medskip

Write $\sigma = (n_1,n_2,\dots,n_k)$, $m=n_k$, and set $A = \bigcap_{i<k} (\cl{H_{n_i}}\setminus H_{n_i})$,
so that
\[
W_\alpha \cap A \cap H_m \;=\; W_\alpha \cap A \cap \cl{H_m} \;\neq\; \varnothing,
\]
for every $\alpha \in S$. For each $n$, select $t_n \in W_{\beta_{n+1}} \cap A \cap H_m$.
We have $|f_{\beta_{n+1}}(t_n)| \geqslant 1$ and $f_{\beta_n}(t_n)=0$.
Let $u \in A \cap \cl{H_m}$ be an accumulation point of the $t_n$. Because $f_{\beta_{n+1}}\prec f_\beta$,
we have $|f_\beta(t_n)|\geqslant 1$ for all $n$, and so $|f_\beta(u)|\geqslant 1$. On the other hand,
if $\xi < \beta$ then there exists $N$ for which $\xi < \beta_n$ whenever $n \geqslant N$,
meaning that $f_\xi(t_n)=0$ for such $n$, and thus $f_\xi(u)=0$.
Therefore $u \in W_\beta$, and thus $u \in W_\beta \cap A \cap \cl{H_m} = W_\beta \cap A \cap H_m$.
However, according to Lemma \ref{1-step}, $u \in \cl{H_m}$ does not have a local base
of connected sets relative to $\cl{H_m}$, meaning that $u \notin H_m$. This contradiction completes
the first step.\medskip

We proceed with step two. We know that $F$ cannot contain an isomorphic copy of $\wone$ or $\wone^*$.
According to results that go back to Hausdorff, if we define a new equivalence relation $\sim$
on $F$ by $f\sim g$ whenever the interval $(f,g)$ is scattered, then every equivalence class
of $\sim$ is a scattered interval. Moreover, the quotient $F/\!\!\sim$, when endowed with
the induced order, is densely ordered. Again, according to Hausdorff, any uncountable scattered
order contains a copy of $\wone$ or $\wone^*$ (see \cite[Theorem 5.28]{rosenstein}). Since we have excluded this possibility, we
conclude that all equivalence classes of $\sim$ are countable.\medskip

The purpose of step two is to show that the quotient $F/\!\!\sim$ is in fact a singleton. From this
we conclude that $F$ is countable and scattered. We assume that $F/\!\!\sim$ is not a singleton
and reach a contradiction. Let $G \subseteq F$ have
the property that $G$ contains precisely one element of each equivalence class of $\sim$.
Then $G$ is densely ordered when given the induced order:\ if $f,h \in G$ and $f \prec h$,
then $f \prec g \prec h$ for some $g \in G$.\medskip

Consider the set $\mathscr{D}$ of all initial segments of $G$ that do not have greatest elements. We can
see that $\mathscr{D}$ is compact with respect to the induced order. By definition, $\mathscr{D}$
is also densely ordered. The fact that $G$ is densely ordered and not a singleton implies that
$\mathscr{D}$ is not the singleton $\{\varnothing\}$, and moreover that no non-empty
open subset of $\mathscr{D}$  can be a~singleton. Notice furthermore that $\mathscr{D}$ is first
countable because $G$ contains no copies of $\wone$ or $\wone^*$.
In particular, if $J \in \mathscr{D}$ is non-empty, then there is a strictly increasing sequence
$(J_n)_{n=1}^\infty$ in $\mathscr{D}$, having union $J$.\medskip

Mimicking a little the procedure in step one above, for every $I \in \mathscr{D}$,
define
\[
W_I \;=\; \bigcap_{f \in I} f^{-1}(0) \cap \bigcap_{g \in G\setminus I} g^{-1}(D),
\]
and take $\sigma_I \in \Sigma$ such that
\[
W_I \cap H_{\sigma_I} \;=\; W_I \cap \widehat{H}_{\sigma_I} \;\neq\; \varnothing.
\]
Let $\mathscr{T}_\sigma = \set{I \in \mathscr{D}}{\sigma_I=\sigma}$. As
$\mathscr{D} = \bigcup_{\sigma\in\Sigma} \mathscr{T}_\sigma$, the
Baire Category Theorem implies that, for some $\sigma$, the closure $\cl{\mathscr{T}_\sigma}$ contains
a non-empty open set $\mathscr{U}$. Since $\mathscr{U}$ cannot be a~singleton, it follows that $(P,Q)
\subseteq \cl{\mathscr{T}_\sigma}$ for some $P,Q \in \mathscr{U}$.\medskip

Fix $J \in \mathscr{T}_\sigma \cap (P,Q)$, take a strictly increasing sequence $(I_n)_{n=1}^\infty$ in $(P,J)$
having union $J$, and select $J_n \in \mathscr{T}_\sigma \cap (I_n,I_{n+1})$ for each $n$. As above, let
$\sigma = (n_1,n_2,\dots,n_k)$, $m=n_k$ and $A = \bigcap_{i<k} (\cl{H_{n_i}}\setminus H_{n_i})$.
Take $t_n \in W_{J_n} \cap A \cap \cl{H_m}$ for all $n$, $f_1 \in J_1$, and
$f_n \in J_n\setminus J_{n-1}$ for $n \geqslant 2$. Then $f_n(t_n)=0$ and $|f_{n+1}(t_n)|\geqslant 1$
for all $n$. Fix a limit $u \in A \cap \cl{H_m}$ of the $t_n$ and pick any $f \in G\setminus J$.
As above, according to Lemma \ref{1-step}, $u \in \cl{H_m}$ does not have a local base
of connected sets relative to $\cl{H_m}$, thus $u \notin H_m$.\medskip

However, we claim that $u \in W_J$, which is a contradiction because it implies that
\[
u \in W_J \cap A \cap \cl{H_m} = W_J \cap A \cap H_m.
\]
Indeed, given any $f \in G\setminus J$,
as $f_{n+1} \prec f$, we have $|f(t_n)| \geqslant 1$ for all $n$, whence $|f(u)|\geqslant 1$.
On the other hand, if $f \in J$ then there exists $N$ such that $f \in J_n$ for $n \geqslant N$.
Thus $f(t_n)=0$ for all such $n$ and so $f(u)=0$. Therefore $u \in W_J$ as claimed and
we have our desired contradiction. This completes the proof of assertion (1).\medskip

Assertions (2) and (3) follow easily. Suppose that $M$ has {\bish}. Let $F \subseteq C(K)$
be a~bounded $1$-$\prec$-chain. The fact that $M$ has {\bish} implies that there are
only countably many distinct $\sim_M$-equivalence classes. By assertion (1), every such equivalence
class of $F$ is countable, so it follows that $F$ itself must be countable. Finally, for assertion (3),
if $M$ is empty then $\sim_M$ has just one equivalence class, so $F$ is countable and scattered.
\end{proof}

We conclude this section by remarking that the converse of Theorem \ref{locconbishop} part (2) is false. The long
line $L$ is locally connected, so $L$ has {\bish}. Meanwhile, $[0,\wone] \subseteq L$,
$[0,\wone]$ does not have {\bish}, and every point of the dense remainder $L\setminus[0,\wone]$
has a local base of connected sets relative to $L$.

\section{Further observations}\label{further-observations}

The class of spaces having {\bish} lacks good permanence properties and, in particular, a~closed subset of space that has {\bish} need not have {\bish}. Indeed, the above example of the long line illustrates this. For another
example, take any compact, Hausdorff space $M$ not having {\bish}, apply Corollary~\ref{dualball}, and observe
the natural embedding of $M$ into $B_{C(M)^*}$ via the Dirac delta functionals.\medskip

Nonetheless, it is easy to see that if $K$ is a compact space that has {\bish}, $M$ is compact and $\map{\pi}{K}{M}$
is a continuous surjection, then $M$ has {\bish} too. This follows from the fact that $f \mapsto f \circ \pi$
is an isometry of $C(M)$ into $C(K)$ that respects the lattice structure.\medskip

Moreover, {\bish} is not preserved by Banach-space isomorphisms of $C(K)$-spaces.

\begin{example}\label{no-isomorphisms}
The property {\bish} is not preserved under Banach-space isomorphisms of $C(K)$-spaces.
\end{example}

\begin{proof}
Let $B=B_{C(\beta\N)^*}$ be the dual unit ball of $C(\beta\N)$, which is isometrically isomorphic to $\ell_\infty$.
It has {\bish} by Corollary~\ref{dualball}. By the Banach--Mazur Theorem, $C(\beta\N)$ embeds into $C(B)$ isometrically.
On the other hand, $C(\beta\N)$ is injective and isomorphic to its Cartesian square. We are now in a position to apply the Pe{\l}czy\'{n}ski decomposition method in order to conclude that there exists an~isomorphism $$C(B)\cong C(B)\oplus_\infty C(\beta\N).$$ On the other hand, the Banach spaces $C(B)\oplus_\infty C(\beta\N)$ and $C(B\sqcup
\beta\N)$ are isometrically isomorphic (here $\sqcup$ denotes disjoint union). Because $\beta\N$ fails {\bish},
$B\sqcup \beta\N$ fails it too.
\end{proof}

Finally, we return to the structure of the left ideal of operators on $C(K)$ having
{\bish}. In \cite[Question 4.3]{hkk:13}, the authors ask whether this ideal is
always two-sided, regardless of whether $K$ has {\bish} or not. We can use the
spaces of Example \ref{no-isomorphisms} to answer this question.

\begin{example}
The set of operators on $C(B\sqcup \beta\N)$ satisfying {\bish} is not a right
ideal.
\end{example}

\begin{proof}
Let $\map{S}{C(B)}{C(B\sqcup \beta\N)}$ be a Banach-space isomorphism. As the Banach spaces $C(B\sqcup \beta\N)$ and $
C(B) \oplus_\infty C(\beta\N)$ are isometrically isomorphic, we may extend $S$
to an operator $\map{T}{C(B\sqcup \beta\N)}{C(B\sqcup \beta\N)}$ by setting $T$ equal to $0$ on $C(\beta\N)$.
Note that $T$ has {\bish} because $S$, as an operator from $C(B)$, has {\bish}.
It remains to notice that $TS^{-1} = I_{C(B\sqcup \beta \mathbb{N})}$ fails {\bish}.
\end{proof}

\subsection*{Acknowledgements}

This paper grew out of discussions held at the Workshop on set theoretic methods
in compact spaces and Banach spaces, Warsaw, and at the inaugural meeting of
the QOP network, Lancaster University, April 2013.\medskip

In the Workshop on set theoretic methods in compact
spaces and Banach spaces, Warsaw, 2013, K.~P.\ Hart announced two results related to
this work:\ the \v Cech--Stone compactification of $[0,\infty)$ does not have {\bish}
(\emph{cf.} Example \ref{connectednobishop}), and a compact, locally connected space has {\bish}
(\emph{cf.} Theorem \ref{locconbishop}).\medskip

Finally, we thank the referee for his or her extremely thorough report, and for the many suggestions which helped to improve the presentation and clarity of the paper.

\end{document}